\theoremstyle{plain}
\newtheorem{prop}{Proposition}[section]
\newtheorem{teo}[prop]{Theorem}
\newtheorem{fact}[prop]{Fact}
\newtheorem{lem}[prop]{Lemma}
\theoremstyle{remark}
\definecolor{red}{rgb}{1,0,0}
\definecolor{green}{rgb}{0,1,0.2}
\title{Components of moduli spaces of spin curves with the expected codimension}
\author{Luca Benzo}
\date{}
\begin{document}
\maketitle
\footnotetext{\noindent 2000 {\em Mathematics Subject Classification}. 14H10, 14H51.
\newline \noindent{{\em Keywords and phrases.} Spin curves, Theta characteristics, Moduli space.}}
\begin{abstract}
\noindent We prove a conjecture of Gavril Farkas claiming that for all integers $r \geq 2$ and $g \geq \binom{r+2}{2}$ there exists a component of the locus $\mathcal{S}^r_g$ of spin curves with a theta characteristic $L$ such that $h^0(L) \geq r+1$ and $h^0(L)\equiv r+1 (\text{mod } 2)$ which has codimension $\binom{r+1}{2}$ inside the moduli space $\mathcal{S}_g$ of spin curves of genus $g$.
\end{abstract}
\begin{section}{Introduction}
Let $g \geq 1$ be an integer and let $\mathcal{S}_g$ be the moduli space of smooth spin curves parameterizing pairs $[C,L]$ where $C$ is a smooth curve of genus $g$ and $L$ is a theta-characteristic i.e. a line bundle on $C$ such that $L^2 \cong \omega_C$.
Let $r \geq 0$ be an integer and consider the locus
$$\mathcal{S}^{r}_{g} \cong \left\{[C,L] \in \mathcal{S}_g : h^{0}(L) \geq r+1 \hbox{ and } h^{0}(L) \equiv r+1 (\text{mod } 2)\right\}.$$
In \cite{HA} Harris proved that each component of $\mathcal{S}^r_g$ has dimension $\geq 3g-3- \binom{r+1}{2}$. Since the forgetful map $\mathcal{S}_g \rightarrow \mathcal{M}_g$ is finite, this is equivalent to say that each component of $\mathcal{S}^r_g$ has codimension less than or equal to $\binom{r+1}{2}$ in $\mathcal{S}_g$. If the codimension in $\mathcal{S}_g$ of a component $V_r \subset \mathcal{S}^r_g$ equals $\binom{r+1}{2}$, we also say that $V_r$ has the \emph{expected codimension} (in $\mathcal{S}_g$). Of course, the problem of finding components of the expected codimension makes sense only if $3g-3- \binom{r+1}{2} \geq 0$ i.e. when $g$ is not too small with respect to $r$.\\
Many results are known for small $r$. Indeed, it is known that $\mathcal{S}^r_g$ has pure codimension $\binom{r+1}{2}$ in $\mathcal{S}_g$ for all $g$ if $r=1,2$, and for all $g \geq 8$ if $r=3$ (cf. \cite{BEA} and \cite{TEI}).
Moreover, in \cite{FA} Farkas showed, for $3 \leq r \leq 11$, $r \neq 10$ and $g \geq g(r)$ ($g(r)$ is a positive integer depending on $r$), the existence of a component $V_r \subset \mathcal{S}^r_g$ having codimension $\binom{r+1}{2}$ in $\mathcal{S}_g$ and such that, for a general point $[C,L]$ in $V_r$, $L$ is very ample. His proof consists in two parts. The first one is an inductive argument stating that, for fixed integers $r, g_0$, the existence of a component of $\mathcal{S}^r_{g_0}$ of the expected codimension implies, for all $g \geq g_0$, the existence of a component of $\mathcal{S}^r_g$ of the expected codimension. The second is the explicit construction, for the above mentioned values of $r$ and a suitable integer $g(r)$, of a component of $\mathcal{S}^r_{g(r)}$ with the expected codimension.\\
In the same paper, the author conjectures the existence for all $r$ of a component of $\mathcal{S}^r_{g}$, $g \doteq \binom{r+2}{2}$ with the expected codimension. We will discuss in the next section how this value of $g$ comes out.\\
The purpose of the present paper is to prove this conjecture. We first prove the following theorem by induction on $r$
\begin{teo}
For all integers $r \geq 2$ and $g(r) \doteq {r+2 \choose 2}$, the locus $\mathcal{S}^r_{g(r)}$ has a component of codimension ${r+1 \choose 2}$ in $\mathcal{S}_{g(r)}$.
\end{teo}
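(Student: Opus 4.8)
The plan is to reduce the statement to a single Gaussian-map computation and then to establish it by induction on $r$, using a degeneration to reducible curves.

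First I would record the infinitesimal criterion governing the expected codimension. Let $[C,L]\in\mathcal S^r_g$ with $h^0(L)=r+1$. Since the forgetful map $\mathcal S_g\to\mathcal M_g$ is étale, $T_{[C,L]}\mathcal S_g\cong H^1(C,T_C)\cong H^0(C,\omega_C^{\otimes 2})^\vee$ and the theta-characteristic deforms canonically. Analysing when the $r+1$ sections of $L$ persist to first order along a deformation $\xi$, one attaches to $\xi$ the skew-symmetric form $(s,t)\mapsto\langle\xi,\,s\,dt-t\,ds\rangle$ on $H^0(L)$; here $s\,dt-t\,ds\in H^0(\omega_C\otimes L^{\otimes 2})=H^0(\omega_C^{\otimes 2})$ precisely because $L^{\otimes 2}\cong\omega_C$. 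This identifies the conormal space of $\mathcal S^r_g$ at $[C,L]$ with the image of the Gaussian (Wahl) map
\[
\Phi_L\colon\textstyle\bigwedge^2 H^0(C,L)\longrightarrow H^0(C,\omega_C^{\otimes 2}),\qquad s\wedge t\longmapsto s\,dt-t\,ds,
\]
so that the codimension of $\mathcal S^r_g$ at $[C,L]$ equals $\operatorname{rank}\Phi_L\le\binom{r+1}{2}$, recovering Harris's bound, with equality exactly when $\Phi_L$ is injective. Thus it suffices to exhibit, for every $r\ge 2$, one smooth curve $C$ of genus $\binom{r+2}{2}$ carrying a theta-characteristic $L$ with $h^0(L)=r+1$ and $\Phi_L$ injective.

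For the base case $r=2$ I would take $C$ a general smooth plane quintic and $L=\mathcal O_C(1)$: then $g=\binom{4}{2}=6$, $\omega_C=\mathcal O_C(2)=L^{\otimes 2}$, $h^0(L)=3$, and $\bigwedge^2 H^0(L)$ has dimension $3=\binom{3}{2}$, so injectivity of $\Phi_L$ is a direct classical verification. The inductive step is modelled on the recursion $g(r)=g(r-1)+(r+1)$: starting from a general $[C',L']$ in the component $V_{r-1}\subset\mathcal S^{r-1}_{g(r-1)}$ produced at the previous stage — for which $h^0(L')=r$ and, by the inductive hypothesis and the criterion above, $\Phi_{L'}$ is injective — I would form the nodal curve $C=C'\cup_\Delta\Gamma$, where $\Gamma$ is a general elliptic curve and $\Delta=C'\cap\Gamma$ consists of $r+1$ general nodes. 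Then $p_a(C)=g(r-1)+1+(r+1)-1=\binom{r+2}{2}=g(r)$, and equipping $C$ with the limit spin structure $\zeta$ restricting to $L'$ on $C'$ and to the trivial theta-characteristic $\mathcal O_\Gamma$ on $\Gamma$ (inserting the exceptional components of Cornalba's compactification at the nodes) produces a point of $\overline{\mathcal S}^r_{g(r)}$ with $h^0(\zeta)=r+1$ of the correct parity. Note that a basis of $H^0(\zeta)$ is given by lifts of a basis of $H^0(L')$ together with the generator $\sigma$ of $H^0(\mathcal O_\Gamma)$, so that $\bigwedge^2 H^0(\zeta)$ is spanned by $\bigwedge^2 H^0(L')$ and the $r$ cross-classes $s'\wedge\sigma$, in accordance with $\binom{r}{2}+r=\binom{r+1}{2}$.

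The main obstacle is the final step: controlling the Gaussian map across the boundary. Using the normalisation sequences for $C=C'\cup_\Delta\Gamma$ one expresses $\Phi_\zeta$ through the data on $C'$ and on $\Gamma$; on $\bigwedge^2 H^0(L')$ the map is governed by $\Phi_{L'}$, injective by induction, so the crux is to prove that the $r$ images $\Phi_\zeta(s'\wedge\sigma)$ are linearly independent modulo $\operatorname{Im}\Phi_{L'}$, with no unexpected relation appearing in $H^0(\omega_C^{\otimes 2})$ in the limit. Here the genericity of $\Gamma$ and of the $r+1$ nodes $\Delta$ — exactly the configuration forced by the numerology $g(r)-g(r-1)=r+1$ — is what must guarantee the required independence, and this, together with checking that $[C,\zeta]$ lies in the closure of the smooth locus and is smoothable as a spin curve, is the technical heart of the argument. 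Once injectivity of $\Phi_\zeta$ is secured, semicontinuity of the rank shows that a general smoothing $[C_t,L_t]\in\mathcal S^r_{g(r)}$ again has $\Phi_{L_t}$ injective, so by the first step $[C_t,L_t]$ lies on a component $V_r$ of codimension exactly $\binom{r+1}{2}$, completing the induction.
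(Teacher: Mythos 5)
Your reduction to injectivity of the Gaussian map on $\bigwedge^2 H^0(L)$ (via Nagaraj's tangent space description plus Harris' bound) and your choice of degeneration --- attach a general elliptic curve at $g(r)-g(r-1)=r+1$ general points --- match the paper's skeleton. But the step you yourself call the ``technical heart'' is not just unproved: as formulated, it fails. On a Cornalba spin curve one must insert exceptional components $E_i$ at the nodes, and there $\zeta^{\otimes 2}|_{E_i}\cong\mathcal O_{E_i}(2)$ while $\omega_X|_{E_i}\cong\mathcal O_{E_i}$, so the natural target of the limit Gaussian map is $H^0(\omega_X\otimes\zeta^{\otimes 2})$, which is \emph{not} $H^0(\omega^{\otimes 2})$ of the stable model: the two differ by twists along the $E_i$, and which one is the actual limit of $H^0(\omega_{C_t}^{\otimes 2})$ depends on the choice of limit line bundle in the smoothing family. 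Now compute your cross-classes: the lift $\tilde s'$ of $s'\in H^0(L')$ vanishes identically on $\Gamma$ and the lift $\tilde\sigma$ of $\sigma$ vanishes identically on $C'$, so $\tilde s'\,d\tilde\sigma-\tilde\sigma\,d\tilde s'$ vanishes on all of $C'\cup\Gamma$ and is supported entirely on the exceptional components. Under the natural restriction map from $H^0(\omega_X\otimes\zeta^{\otimes 2})$ to sections over the stable model --- the identification through which one would compare with nearby smooth fibres --- exactly this subspace dies. So in the naive limit the $r$ cross-classes contribute nothing, the limit map is never injective, and no genericity assumption on $\Delta$ or $\Gamma$ can repair this; what is needed is a genuinely finer degeneration of the Gaussian maps (twisted limit bundles, higher-order terms in the smoothing parameter), which is precisely the difficulty that confined Farkas' explicit constructions to $r\le 11$.

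There is a second gap: even with an injective, correctly defined limit map, rank semicontinuity only yields the theorem if $[C,\zeta]$ is known to lie in the closure of $\mathcal S^r_{g(r)}$, i.e.\ if nearby \emph{smooth} spin curves really have $h^0\ge r+1$. Upper semicontinuity of $h^0$ points the wrong way (nearby fibres could have $h^0=r-1$; parity does not exclude this), and your parenthetical ``checking that $[C,\zeta]$ lies in the closure of the smooth locus'' is exactly the statement requiring proof, typically via a dimension count showing the relevant component of the degeneracy locus in $\overline{\mathcal S}_{g(r)}$ is not contained in the boundary. The paper avoids both problems by never entering the boundary of the spin moduli space: it embeds $C'$ by $L'$ as a curve in a hyperplane $H\subset\mathbb P^{r+1}$, attaches an elliptic \emph{normal} curve of degree $r+2$ through $r+2$ general points of $C'$, smooths the resulting nodal curve inside the Hilbert scheme using the normal-bundle sequences of Lemmas~\ref{h1nd-delta} and~\ref{h1nxc} (here $h^1(N)=0$ substitutes for Gaussian-map injectivity, by Fact~\ref{ineq}), and only afterwards recovers the theta-characteristic on the smooth curve $\Gamma$ from the cohomological computation $h^1(\mathcal O_\Gamma(2))=1$ together with $\deg\mathcal O_\Gamma(2)=2p_a-2$. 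In short: the paper smooths an embedded curve and deduces the spin structure; you smooth a spin structure and must control its Gaussian map across the boundary --- and that is the part your proposal leaves open.
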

This result, combined with Farkas' inductive argument, gives the following
\begin{teo}
For all integers $r \geq 2$ and $g \geq g(r) \doteq {r+2 \choose 2}$, the locus $\mathcal{S}^r_{g}$ has a component of codimension ${r+1 \choose 2}$ in $\mathcal{S}_{g}$.
\end{teo}
If $C \subset \mathbb{P}^r$ is a nondegenerate curve, we will denote by $N_C$ (respectively, $\mathcal{I}_C$) the normal sheaf (respectively, ideal sheaf) of $C$ in $\mathbb{P}^r$. We work over the field of complex numbers.\\
This research project was partially supported by FIRB 2012 "Moduli spaces and Applications".
\end{section}
\begin{section}{Preliminary results}
Let $C$ be a smooth curve and $L$ a line bundle on it. Let $\Phi_L$ be the Gaussian map of $L$ and let $\Psi_L \doteq {\Phi_L}_{|\wedge^2 H^{0}(L)}:\wedge^2 H^{0}(L) \rightarrow H^{0}(\omega_C \otimes L^2)$ (see \cite{WA} or \cite{FA}, Section 3 for details). If $L$ is very ample, identifying $C$ with its embedding in $\mathbb{P}^r$ by the map $\varphi_L$ defined by $L$ one has
\begin{equation}
H^{0}(N^{\vee}_C(2)) = \ker \Phi_L=\ker \Psi_L \oplus S_2(L)
\end{equation}
where $S_2(L) \doteq \ker \left\{ \text{Sym}^2 H^{0}(L) \rightarrow H^{0}(L^2)\right\} = H^{0}(\mathcal{I}_C(2))$, hence $\Psi_L$ is injective if and only if $H^{0}(\mathcal{I}_C(2)) \cong H^{0}(N^{\vee}_C(2))$.\\
The map $\Psi_L$ has a very interesting relation with spin curves. For a point $[C,L] \in \mathcal{S}^r_g$, the forgetful map $\pi:\mathcal{S}_g \rightarrow \mathcal{M}_g$ gives a natural identification $T_{[C,L]}\mathcal{S}_g = T_{[C]}\mathcal{M}_g=H^{0}(\omega^2_C)^{\vee}$. Nagaraj (\cite{NAG}) has shown that
$$T_{[C,L]}\mathcal{S}^r_g = \left( \text{Im}(\Psi_L)\right)^{\perp}$$
where $\perp$ denotes the orthogonal complement in $T_{[C,L]}\mathcal{S}_g$.\\
If the map $\Psi_L$ is injective, then $\dim \left( \text{Im}(\Psi_L)\right)^{\perp} =3g-3- \dim (\wedge^2 H^{0}(L))=3g-3-\binom{r+1}{2}$,
(note that in this case $h^{0}(L)=r+1$, otherwise Harris' bound would be violated)
hence to show that a component $V_r \subset \mathcal{S}^{r}_{g}$ has the expected codimension, it is sufficient to find a pair $[C,L] \in V_r$ such that
$\Psi_L$ is injective. As a consequence, one immediately obtains the following
\begin{fact}
\label{ineq}
Let $C \subset \mathbb{P}^r$ be a smooth half-canonical curve of genus $g$. Then one has that $h^{0}(\mathcal{I}_C(2)) \leq h^{0}(N^{\vee}_C(2))=h^1(N_C)$. Moreover, if equality holds (in particular if $h^1(N_C)=0$), then $[C,\mathcal{O}_C(1)]$ belongs to a component of $\mathcal{S}^{h^{0}(\mathcal{O}_C(1))-1}_g$ having the expected codimension in $\mathcal{S}_g$.
\end{fact}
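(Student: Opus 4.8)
The statement is a formal consequence of the preliminary material, so the plan is to read the inequality off the decomposition (1), to identify the equality case with the injectivity of $\Psi_L$, and then to sandwich the dimension of the relevant component between Harris' bound and the Zariski tangent space. Throughout write $L \doteq \mathcal{O}_C(1)$ and $n \doteq h^0(L)-1$; the half-canonical hypothesis says exactly that $L^2 \cong \omega_C$, so $L$ is a theta characteristic and $[C,L] \in \mathcal{S}^n_g$.

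To establish the inequality I would first compute the right-hand side by Serre duality. Since $L^2 \cong \omega_C$ one has $N^{\vee}_C(2) = N^{\vee}_C \otimes L^2 \cong N^{\vee}_C \otimes \omega_C$, so Serre duality on the smooth curve $C$ gives $H^0(N^{\vee}_C \otimes \omega_C) \cong H^1(N_C)^{\vee}$ and hence $h^0(N^{\vee}_C(2)) = h^1(N_C)$. Taking dimensions in the decomposition (1) and using $S_2(L) = H^0(\mathcal{I}_C(2))$ then yields $h^0(N^{\vee}_C(2)) = \dim \ker \Psi_L + h^0(\mathcal{I}_C(2))$. As $\dim \ker \Psi_L \geq 0$, this gives $h^0(\mathcal{I}_C(2)) \leq h^0(N^{\vee}_C(2))$, with equality precisely when $\ker \Psi_L = 0$, i.e. when $\Psi_L$ is injective.

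For the second assertion, assume equality, so that $\Psi_L$ is injective. By Nagaraj's formula $T_{[C,L]}\mathcal{S}^n_g = (\text{Im}\,\Psi_L)^{\perp}$, and injectivity gives $\dim T_{[C,L]}\mathcal{S}^n_g = 3g-3-\dim \wedge^2 H^0(L) = 3g-3-\binom{n+1}{2}$, as recorded above. I would then combine this with Harris' theorem, which bounds the dimension of every component of $\mathcal{S}^n_g$ from below by $3g-3-\binom{n+1}{2}$. Since the dimension of any irreducible component through $[C,L]$ is at most $\dim T_{[C,L]}\mathcal{S}^n_g$, the two inequalities force the component $V$ containing $[C,L]$ to have dimension exactly $3g-3-\binom{n+1}{2}$ (and $[C,L]$ to be a smooth point); as the forgetful map $\mathcal{S}_g \to \mathcal{M}_g$ is finite, $V$ has the expected codimension $\binom{n+1}{2}$ in $\mathcal{S}_g$. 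The parenthetical case $h^1(N_C)=0$ needs no separate work: then $h^0(N^{\vee}_C(2))=0$ forces $h^0(\mathcal{I}_C(2))=0$, so equality holds automatically.

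Once (1), Serre duality, Nagaraj's formula and Harris' bound are available the argument is essentially bookkeeping. The only step needing genuine care is this dimension sandwich: one must invoke that the Zariski tangent space bounds the local dimension from above in order to upgrade Harris' lower bound to an equality, thereby simultaneously locating the expected component and showing that $[C,L]$ is a smooth point of $\mathcal{S}^n_g$.
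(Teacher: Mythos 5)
Your proposal is correct and follows the same route the paper takes (the Fact is stated there as an immediate consequence of the preceding discussion): Serre duality with $\mathcal{O}_C(2)\cong\omega_C$ to get $h^0(N^{\vee}_C(2))=h^1(N_C)$, the decomposition $H^0(N^{\vee}_C(2))=\ker\Psi_L\oplus H^0(\mathcal{I}_C(2))$ for the inequality and the identification of equality with injectivity of $\Psi_L$, and then Nagaraj's tangent-space formula sandwiched against Harris' lower bound to force the expected codimension. Your explicit spelling out of the dimension sandwich and of the trivial $h^1(N_C)=0$ case is exactly the bookkeeping the paper leaves implicit.
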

Suppose that $C \subset \mathbb{P}^r$ is a smooth half-canonical curve of genus $g$ such that $h^{0}(\mathcal{O}_C(1))=r+1$.
If $g < \binom{r+2}{2}$, then the exact sequence
$$0 \rightarrow \mathcal{I}_C(2) \rightarrow \mathcal{O}_{\mathbb{P}^{r}}(2) \rightarrow \mathcal{O}_C(2) \rightarrow 0$$
gives $h^{0}(\mathcal{I}_C(2)) >0$, hence by Fact \ref{ineq} one has $h^1(N_C) > 0$. This makes extremely difficult to solve the problem by degeneration arguments i.e. exhibiting $C$ as a deformation of a reducible curve $X \subset \mathbb{P}^r$ because these deformations are possibly obstructed.\\
On the other hand, if $g > \binom{r+2}{2}$ and $H^1(N_C)=(0)$, then the number of moduli of the component $W \subset Hilb^r_{g,g-1}$ parameterizing the curve $C$ is greater than or equal to $\min \left\{3g-3,3g-3+\rho(g,r,g-1)\right\} > 3g-3-\binom{r+1}{2}$ (see \cite{Sernesi} for more details). Nevertheless, by Fact \ref{ineq}, the point $[C,\mathcal{O}_C(1)]$ belongs to a component of $\mathcal{S}^r_g$ which has the expected codimension. This implies that $W$ does not consist entirely of half-canonically embedded curves, hence $C$ cannot be obtained as a general deformation of a curve $X$ as above.\\
The value $g=\binom{r+2}{2}$ is the \emph{only} value for which a regular component of $Hilb^r_{g,g-1}$ which consists entirely of half-canonically embedded curves can exist. In the next section we will prove, by induction on $r$, that for every $r \geq 2$ there exists a component $V_r \subset \mathcal{S}^r_{\binom{r+2}{2}}$ with the expected codimension and such that, for a general point $[C,L] \in V_r$, the line bundle $L$ is very ample. The result which will be used as the base case for the induction is the following
\begin{prop}
\label{r=2}
The locus $\mathcal{S}^2_{6}$ has a component $V$ of codimension $3$ in $\mathcal{S}_6$ such that, for a general point $[C,L] \in V$, the line bundle $L$ is very ample and $h^{1}(N_C)=0$ in the embedding of $C$ defined by $L$.
\end{prop}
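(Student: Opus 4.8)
The plan is to exhibit the required half-canonical curve concretely as a smooth plane quintic, so that everything can be read off from cohomology on $\mathbb{P}^2$. First I would observe that for $r=2$ the relevant genus is $g=\binom{4}{2}=6$, and that a smooth plane curve $C \subset \mathbb{P}^2$ of degree $5$ has genus $\binom{5-1}{2}=6$. By adjunction $\omega_C \cong \mathcal{O}_C(d-3)=\mathcal{O}_C(2)$, so setting $L \doteq \mathcal{O}_C(1)$ one has $L^{\otimes 2}\cong \omega_C$; thus $L$ is a theta-characteristic and $C$ is half-canonical. Being the restriction of $\mathcal{O}_{\mathbb{P}^2}(1)$ to a smooth nondegenerate curve, $L$ is automatically very ample, which already settles the very-ampleness clause.

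Next I would pin down $h^0(L)$, hence $r$. From the ideal-sheaf sequence $0 \to \mathcal{I}_C(1) \to \mathcal{O}_{\mathbb{P}^2}(1) \to \mathcal{O}_C(1) \to 0$, using $\mathcal{I}_C \cong \mathcal{O}_{\mathbb{P}^2}(-5)$ and the vanishing $H^i(\mathbb{P}^2,\mathcal{O}(-4))=0$ for $i=0,1$, one obtains $h^0(L)=h^0(\mathcal{O}_{\mathbb{P}^2}(1))=3=r+1$. Since $h^0(L)=3$ is odd, $[C,L]$ indeed lies in $\mathcal{S}^2_6$.

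The one computation that carries the content is the vanishing $h^1(N_C)=0$. As $C$ is a divisor in the surface $\mathbb{P}^2$, its normal bundle is $N_C \cong \mathcal{O}_{\mathbb{P}^2}(C)|_C \cong \mathcal{O}_C(5)$. By Serre duality on $C$, $h^1(N_C)=h^0(\omega_C \otimes \mathcal{O}_C(-5))=h^0(\mathcal{O}_C(2-5))=h^0(\mathcal{O}_C(-3))=0$, the last equality because $\mathcal{O}_C(-3)$ has negative degree. With $h^1(N_C)=0$ in hand, Fact \ref{ineq} applies verbatim (equality $h^0(\mathcal{I}_C(2))=h^1(N_C)=0$ holds trivially) and hands me a component $V$ of $\mathcal{S}^{h^0(\mathcal{O}_C(1))-1}_6=\mathcal{S}^2_6$ of the expected codimension $3$ through $[C,\mathcal{O}_C(1)]$. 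Taking for the general point of $V$ a general smooth plane quintic equipped with $L=\mathcal{O}_C(1)$ then yields all the asserted properties.

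I do not expect a serious obstacle here: the whole argument is explicit, and its substance is the recognition that for $(r,g)=(2,6)$ the natural half-canonical model is precisely the smooth plane quintic, for which the normal-bundle cohomology vanishes outright so that Fact \ref{ineq} can be invoked directly. As a consistency check I would note that smooth plane quintics modulo $\mathrm{PGL}_3$ form a family of dimension $20-8=12=\dim \mathcal{S}_6-3$, in agreement with the expected codimension.
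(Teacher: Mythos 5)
Your proposal is correct and is exactly the paper's argument: the paper's own proof is the one-line observation that a smooth plane quintic $C$ with $L=\mathcal{O}_C(1)$ is half-canonical by adjunction and has $h^1(N_C)=0$. You have simply filled in the computations the paper leaves implicit ($h^0(L)=3$, $N_C\cong\mathcal{O}_C(5)$, Serre duality, and the appeal to Fact \ref{ineq}), all of which check out.
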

\begin{proof}
Let $C$ be a smooth plane quintic and $L=\mathcal{O}_C(1)$. The adjunction formula gives that $L$ is a theta-characteristic and $h^{1}(N_C)=0$.
\end{proof}
In order to prove the conjecture we need also the following fundamental result due to Farkas, which is proved by inductively smoothing a stable spin structure (see \cite{COR} for details) over a suitable reducible curve.
\begin{prop}[\cite{FA}, Proposition 2.4]
\label{farkasinductive}
Fix integers $r,g_0 \geq 1$. If $\mathcal{S}^r_{g_0}$ has a component of codimension $\binom{r+1}{2}$ in $\mathcal{S}_{g_0}$, then for every $g \geq g_0$ the locus $\mathcal{S}^r_g$ has a component of codimension $\binom{r+1}{2}$ in $\mathcal{S}_g$.
\end{prop}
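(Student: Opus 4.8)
The plan is to argue by induction on $g$, so that the whole statement reduces to a single step: assuming that $\mathcal{S}^r_{g}$ has a component $V$ of codimension $\binom{r+1}{2}$, I would produce a component of $\mathcal{S}^r_{g+1}$ of the same codimension, and then iterate to reach every $g \geq g_0$. By the discussion preceding Fact~\ref{ineq}, after possibly shrinking $V$ I may fix a general point $[C,\theta] \in V$ for which $h^0(\theta) = r+1$ and the restricted Gaussian map $\Psi_\theta$ is injective; this injectivity is exactly the open condition certifying that $V$ has the expected codimension.

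The key construction is a boundary point of Cornalba's compactification $\overline{\mathcal{S}}_{g+1}$. Fix a general point $y \in C$, an elliptic curve $E$ carrying a nontrivial $2$-torsion point, i.e. a theta-characteristic $\eta$ on $E$ with $h^0(\eta)=0$, and let $\widetilde{X} = C \cup_{y_1} R \cup_{y_2} E$ be the quasistable curve obtained by joining $C$ and $E$ through an exceptional component $R \cong \mathbb{P}^1$ (the exceptional component is forced on us by the parity obstruction at the node, as one sees by comparing $\deg \omega_{\widetilde{X}}$ on the components). I would equip $\widetilde{X}$ with the line bundle $L$ of total degree $g$ restricting to $\theta$ on $C$, to $\mathcal{O}_R(1)$ on $R$, and to $\eta$ on $E$, glued arbitrarily at $y_1,y_2$; the numerics $\deg\theta = g-1$, $\deg L|_R = 1$, $\deg\eta = 0$, together with the forced vanishing of the section map $L^2 \to \omega_{\widetilde{X}}$ on the exceptional component, make $(\widetilde{X},L)$ a stable spin curve in the sense of \cite{COR}. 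A direct count of global sections, using $h^0(\eta)=0$ (so any section vanishes on $E$, whence its $R$-component vanishes at $y_2$ and the whole section is determined by its restriction to $C$), shows that restriction identifies $H^0(\widetilde{X},L) \xrightarrow{\sim} H^0(C,\theta)$; in particular $h^0(L)=r+1$, with the correct parity, so $[\widetilde{X},L]$ lies in the closure of $\mathcal{S}^r_{g+1}$.

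The heart of the argument is to show that this boundary point lies on a component of the expected codimension and smooths into the interior. First I would check that the Gaussian map is injective already on the central fiber: since $(\omega_{\widetilde{X}} \otimes L^2)|_C \cong \omega_C^{2}(y_1)$ and the section map identifies $\wedge^2 H^0(\widetilde{X},L)$ with $\wedge^2 H^0(C,\theta)$, restriction to $C$ carries $\Psi_L(\sigma \wedge \tau)$ to $\Psi_\theta(\sigma|_C \wedge \tau|_C)$ viewed through the twist $H^0(\omega_C^{2}) \hookrightarrow H^0(\omega_C^{2}(y_1))$, which is injective because $\Psi_\theta$ is; hence $\ker \Psi_L = 0$. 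Via Nagaraj's description of $T\mathcal{S}^r$ extended across this boundary point, the injectivity of $\Psi_L$ forces the tangent space at $[\widetilde{X},L]$ to have the expected dimension $3(g+1)-3-\binom{r+1}{2}$, which together with Harris' bound pins down both the local dimension and the smoothness of the component through $[\widetilde{X},L]$. Finally, invoking the smoothing of stable spin structures from \cite{COR}, the node-smoothing direction lies in $T\mathcal{S}^r_{g+1}$ and moves $[\widetilde{X},L]$ into the locus of smooth spin curves; a general such smoothing $[C',\theta']$ then has $\Psi_{\theta'}$ injective, and Fact~\ref{ineq} yields the desired component of $\mathcal{S}^r_{g+1}$ of codimension $\binom{r+1}{2}$.

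I expect the main obstacle to be the transition across the boundary: justifying that Nagaraj's identification of the tangent space and the Gaussian computation remain valid at the quasistable curve $\widetilde{X}$, and that the smoothing of the node can be realized inside $\mathcal{S}^r_{g+1}$, so that $h^0$ does not drop below $r+1$ and the limiting spin structure genuinely deforms rather than the component getting trapped in the boundary. This is the local deformation theory of the stable spin curve at the exceptional component, and it is precisely the content supplied by \cite{COR}.
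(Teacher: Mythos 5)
Your reduction to a single genus increment and your boundary construction coincide exactly with Farkas' own (this is what the paper alludes to by ``inductively smoothing a stable spin structure over a suitable reducible curve''): the quasistable curve $C\cup_{y_1}R\cup_{y_2}E$ with spin structure $(\theta,\mathcal{O}_R(1),\eta)$, $\eta$ a nontrivial $2$-torsion point, and your count $h^0(\widetilde X,L)=h^0(\theta)=r+1$ with the correct parity are all correct. The gap is in the passage from this boundary point to a component of the expected codimension meeting the interior. Nagaraj's theorem $T_{[C,L]}\mathcal{S}^r_g=\left(\mathrm{Im}\,\Psi_L\right)^{\perp}$ is proved only for \emph{smooth} spin curves; the ``extension across this boundary point'' that your tangent-space step requires is not in \cite{NAG}, not in \cite{COR}, and is not routine --- making sense of the Gaussian map on a quasistable curve and relating it to the local structure of the degeneracy locus $\overline{\mathcal{S}}^{\,r}_{g+1}$ at a Cornalba boundary point is precisely the hard content you are assuming. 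Similarly, \cite{COR} constructs the compactification $\overline{\mathcal{S}}_{g+1}$ and its boundary, but contains nothing about the loci $\mathcal{S}^r$, so it cannot supply your claim that ``the node-smoothing direction lies in $T\mathcal{S}^r_{g+1}$.'' Finally, there is a circularity in invoking Harris' bound at the boundary: a priori the component of the degeneracy locus through $[\widetilde X,L]$ could be entirely contained in $\Delta_1$, and Harris' theorem for smooth spin curves then says nothing about its dimension; one must first extend Harris' estimate to the degeneracy locus inside the proper space $\overline{\mathcal{S}}_{g+1}$.

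Farkas closes the argument with a dimension count instead of tangent spaces, and this is the missing idea. With $[C,\theta]$ general in the given component $V\subset\mathcal{S}^r_g$ of codimension $\binom{r+1}{2}$, your own parity observation already shows that, locally near $t=[\widetilde X,L]$, every point of $\overline{\mathcal{S}}^{\,r}_{g+1}\cap\Delta_1$ arises from the construction applied to a point of $V$ (a nearby $\eta'$ remains even, so all sections live on the genus-$g$ component); hence this intersection has local dimension $\dim V+2=3(g+1)-4-\binom{r+1}{2}$. The extended Harris bound forces every component $W$ of $\overline{\mathcal{S}}^{\,r}_{g+1}$ through $t$ to satisfy $\dim W\ge 3(g+1)-3-\binom{r+1}{2}$, so $W\not\subset\Delta_1$; then $\dim W\le\dim_t\left(W\cap\Delta_1\right)+1$ gives equality, and since $t$ lies in no other boundary divisor, $W$ meets $\mathcal{S}_{g+1}$ and produces the desired component. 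So keep your construction and section count, but replace the Gaussian-map and tangent-space steps by this dimension count: as written, your proof rests on two statements (boundary Nagaraj, boundary smoothing inside $\mathcal{S}^r$) that the cited references do not prove.
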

The core of the proof of our result is the construction of a reducible curve $X \subset \mathbb{P}^r$ which can be smoothed to a curve with a theta characteristic and having nice properties ensuring expected codimension of the component of $\mathcal{S}^r_g$ parameterizing it. To this purpose, we briefly recall some basic facts about deformations of embedded curves and then state two general lemmas.\\ \\
Let $X \subset \mathbb{P}^r$, $r \geq 2$, be a connected reduced curve with only nodes as singularities. There is a 4-terms exact sequence
$$0 \rightarrow T_X \rightarrow {T_{\mathbb{P}^r}}_{|X} \rightarrow N_X \rightarrow T^1_X \rightarrow 0$$
where $T_X \doteq \textbf{Hom}(\Omega^1_X,\mathcal{O}_X)$ and $T^1_X$ is the \emph{first cotangent sheaf} of $X$, which is a torsion sheaf supported on the singular locus $S$ of $X$ and having stalk $\mathbb{C}$ at each of the points. The sheaf $N'_X \doteq \ker \left\{N_X \rightarrow T^1_X \right\}$ is called the \emph{equisingular normal sheaf} of $X$.\\
For each $p \in S$ let $T^1_p$ denote the restriction to $p$ of $T^1_X$ extended by zero on $X$.\\
Moreover, from the deformation-theoretic interpretation of the cohomology spaces associated to these sheaves, it follows that, if the cohomology maps $H^{0}(N_X) \rightarrow H^{0}(T^1_p)$ are surjective for every $p \in S$ and $H^{1}(N_X)=(0)$, then the curve $X$ (flatly) deforms to a smooth curve in $\mathbb{P}^r$ i.e. it is \emph{smoothable in $\mathbb{P}^r$}. Note that in particular $X$ is smoothable if $H^{1}(N'_X)=(0)$ (see \cite{Sernesi}, Section 1 for an extensive discussion of all this topic).
\begin{lem}[\cite{Sernesi}, Lemma 5.1]
Let $r \geq 2$ be an integer, let $C$ and $D$ be smooth curves in $\mathbb{P}^r$ intersecting transversally in a smooth $0$-dimensional scheme $\Delta$ and let $X \doteq C \cup D$. Then there are exact sequences
\begin{equation}
\label{seq1}
0 \rightarrow {N_X}_{|D}(-\Delta) \rightarrow N'_X \rightarrow N_C \rightarrow 0
\end{equation}
\begin{equation}
\label{seq2}
0 \rightarrow N_D(-\Delta) \rightarrow {N_X}_{|D}(-\Delta) \rightarrow T^1_X \rightarrow 0.
\end{equation}
\end{lem}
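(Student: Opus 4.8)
The plan is to deduce both sequences from the ideal (structure) sequences of the two components, combined with one explicit computation of the normal sheaf at a node. I would first record the key structural fact: since $C$ and $D$ are smooth and meet transversally, $X$ has only nodes and is therefore a local complete intersection of codimension $r-1$ in $\mathbb{P}^r$. Consequently the normal sheaf $N_X=\textbf{Hom}(\mathcal{I}_X/\mathcal{I}_X^2,\mathcal{O}_X)$ occurring in the four-term sequence is \emph{locally free} of rank $r-1$, and $N'_X=\ker\{N_X\to T^1_X\}$ is the torsion-free subsheaf that fails to be locally free exactly along $\Delta$. In local analytic coordinates $x_1,\dots,x_r$ at a node $p$ with $C=\{x_2=\dots=x_r=0\}$ and $D=\{x_1=x_3=\dots=x_r=0\}$ one has $\mathcal{I}_X=(x_1x_2,x_3,\dots,x_r)$, so $N_X$ is freely generated by the dual basis $e_1,\dots,e_{r-1}$ of $\overline{x_1x_2},\overline{x}_3,\dots,\overline{x}_r$, the map $N_X\to T^1_X$ is projection onto the $e_1$-component modulo $\mathfrak{m}_p=(x_1,x_2)$, and $T^1_X\cong\mathbb{C}$ is generated by the class of $e_1$. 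I would use this normal form throughout.

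For (\ref{seq1}) I would start from the ideal sequence of $C$ in $X$,
\begin{equation*}
0\to\mathcal{O}_D(-\Delta)\to\mathcal{O}_X\to\mathcal{O}_C\to 0,
\end{equation*}
and tensor it by the locally free sheaf $N_X$; since $N_X\otimes\mathcal{O}_D(-\Delta)={N_X}_{|D}(-\Delta)$ and tensoring by a locally free sheaf is exact, this yields $0\to{N_X}_{|D}(-\Delta)\to N_X\to{N_X}_{|C}\to 0$. It then suffices to check two node-local statements, both immediate in the normal form above: first, that ${N_X}_{|D}(-\Delta)\subseteq N'_X$, because its $e_1$-component lies in $\mathcal{I}_{C/X}=x_2\mathcal{O}_D\subset\mathfrak{m}_p$ and hence dies in $T^1_X$; second, that the image of $N'_X$ under the restriction $N_X\to{N_X}_{|C}$ is exactly the subsheaf $N_C\hookrightarrow{N_X}_{|C}$ dual to $\mathcal{I}_X\subset\mathcal{I}_C$. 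Granting these, since ${N_X}_{|D}(-\Delta)\subseteq N'_X$ and the quotient $N'_X/{N_X}_{|D}(-\Delta)$ is carried isomorphically onto $N_C$, the displayed sequence restricts to $0\to{N_X}_{|D}(-\Delta)\to N'_X\to N_C\to 0$, which is (\ref{seq1}).

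For (\ref{seq2}) I would use instead the ideal inclusion $\mathcal{I}_X\subset\mathcal{I}_D$, which after dualizing $\mathcal{I}_X/\mathcal{I}_X^2\to\mathcal{I}_D/\mathcal{I}_D^2$ and restricting to $D$ induces a natural map $N_D\to{N_X}_{|D}$. In the local model this map sends the generator dual to $\overline{x}_1$ to $x_2e_1$ and the generators dual to $\overline{x}_3,\dots,\overline{x}_r$ isomorphically onto $e_2,\dots,e_{r-1}$; it is therefore injective (an isomorphism away from $\Delta$, with $N_D$ torsion-free) and its cokernel is $\mathcal{O}_D/(x_2)\cdot e_1\cong\mathbb{C}_p$, canonically identified with $T^1_X$ since both quotients are generated by the class of $e_1$. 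This produces $0\to N_D\to{N_X}_{|D}\to T^1_X\to 0$, and twisting by the line bundle $\mathcal{O}_D(-\Delta)$ — an exact operation — together with the canonical isomorphism $T^1_X\otimes\mathcal{O}_D(-\Delta)\cong T^1_X$ (a skyscraper supported on $\Delta$ is unchanged after tensoring with a line bundle) yields (\ref{seq2}).

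The tensoring and twisting steps are formal once $N_X$ is known to be locally free, so I expect the main obstacle to be the node-local identification in (\ref{seq1}) of the image of $N'_X$ in ${N_X}_{|C}$ with $N_C$. This is precisely where the \emph{equisingular} normal sheaf, rather than $N_X$ itself, is essential: replacing $N'_X$ by $N_X$ would give all of ${N_X}_{|C}$ in the quotient, and it is the interaction between the equisingularity condition (which forces the $e_1$-component into $\mathfrak{m}_p$) and the restriction $\mathfrak{m}_p\twoheadrightarrow(x_1)\mathcal{O}_C$ that produces $N_C$ on the nose. I would also take care to verify that every map written in local coordinates is the restriction of a globally defined, coordinate-independent sheaf map — which holds because each is built intrinsically from the ideal inclusions $\mathcal{I}_X\subset\mathcal{I}_C$ and $\mathcal{I}_X\subset\mathcal{I}_D$ — and to check that the twist by $\Delta$ appearing as the cokernel in (\ref{seq2}) matches the one appearing as the kernel in (\ref{seq1}).
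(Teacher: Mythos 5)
Your argument is correct, but there is nothing in the paper to compare it against: Benzo does not prove this lemma at all. It is imported as a black box, with the bracketed attribution pointing to Lemma 5.1 of Sernesi's \emph{On the existence of certain families of curves}, so what you have produced is in effect a reconstruction of the cited result's proof rather than an alternative to anything in this paper. As such it is sound, and it follows the standard line of the reference: nodality makes $X$ a local complete intersection, so $N_X$ is locally free and one may tensor $0 \rightarrow \mathcal{O}_D(-\Delta) \rightarrow \mathcal{O}_X \rightarrow \mathcal{O}_C \rightarrow 0$ by it to get $0 \rightarrow {N_X}_{|D}(-\Delta) \rightarrow N_X \rightarrow {N_X}_{|C} \rightarrow 0$, after which everything is a node-local check in the normal form $\mathcal{I}_X=(x_1x_2,x_3,\dots,x_r)$. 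Your local verifications are the right ones and they do close up: the kernel ${N_X}_{|D}(-\Delta)=\mathcal{I}_{C/X}\cdot N_X$ has $e_1$-component in $(x_2)\subset\mathfrak{m}_p$, hence lies in $N'_X$; the natural map $N_C \rightarrow {N_X}_{|C}$ sends the generator dual to $\overline{x}_2$ to $x_1e_1$, so it is injective with image exactly the sections whose $e_1$-component lies in $\mathfrak{m}_p\mathcal{O}_C$, which coincides with the image of $N'_X$; and $N_D \rightarrow {N_X}_{|D}$ sends the generator dual to $\overline{x}_1$ to $x_2e_1$, giving cokernel $\mathbb{C}_p$. Two points deserve one more line each in a final write-up, though neither is a gap: first, the identification of $\text{coker}(N_D \rightarrow {N_X}_{|D})$ with $T^1_X$ is canonical not merely because both are generated by the class of $e_1$, but because the composite $N_X \rightarrow {N_X}_{|D} \rightarrow \text{coker}$ is reduction of the $e_1$-component modulo $(x_1,x_2)=\mathfrak{m}_p$, i.e.\ it agrees with the canonical map $N_X \rightarrow T^1_X$; second, for sequence (\ref{seq1}) you should state explicitly that $\ker(N'_X \rightarrow {N_X}_{|C}) = N'_X \cap {N_X}_{|D}(-\Delta) = {N_X}_{|D}(-\Delta)$, which is immediate from the inclusion established in your first local check but is needed for exactness on the left and in the middle.
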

\begin{lem}
\label{critical}
Let $r \geq 2$ be an integer, let $X=\mathcal{X}_0 \subset \mathbb{P}^r$ be a nodal curve and let $q: \mathcal{X} \subset \mathbb{P}^r \times B \rightarrow B$ be a (flat) family of locally trivial deformations of $X$ over an irreducible algebraic scheme $B$. Assume that the \emph{critical scheme} $\mathcal{S} \subset \mathcal{X}$ of the map $q$, i.e. the subscheme of nodal points of the fibres of $q$, is irreducible. Moreover, assume that $H^{1}(N_X)=(0)$ and $h^{0}(N'_X) < h^{0}(N_X)$. Then the curve $X$ is smoothable in $\mathbb{P}^r$.
\end{lem}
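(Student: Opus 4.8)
The plan is to deduce smoothability of $X$ from the numerical criterion recalled just before the statement: a nodal curve $Y \subset \mathbb{P}^r$ with $H^1(N_Y)=(0)$ is smoothable as soon as, for every node $p$ of $Y$, the evaluation $H^0(N_Y) \rightarrow H^0(T^1_p)\cong \mathbb{C}$ is surjective. Since $H^0(T^1_p)$ is one-dimensional this is just the non-vanishing of the $p$-component of the map $\mu_Y\colon H^0(N_Y)\rightarrow H^0(T^1_Y)=\bigoplus_p T^1_p$ induced by $N_Y \rightarrow T^1_Y$. Taking cohomology in $0\rightarrow N'_Y\rightarrow N_Y\rightarrow T^1_Y\rightarrow 0$ identifies $\ker \mu_Y=H^0(N'_Y)$, so for $Y=X$ the hypothesis $h^0(N'_X)<h^0(N_X)$ says exactly that $\mu_X\neq 0$, i.e.\ at least one node of $X$ is ``active''. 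The core of the argument will be to spread this single active node out over the family and down to a general fibre.

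First I would globalize $\mu$ over $B$. After shrinking $B$ to the dense open subset where $H^1(N_{\mathcal{X}_b})=(0)$ (an open condition by semicontinuity, still containing $0$, and only removing from the irreducible $\mathcal{S}$ a closed subset so that $\mathcal{S}$ stays irreducible), the sheaves $q_*N_{\mathcal{X}/\mathbb{P}^r\times B}$ and $q_*T^1_{\mathcal{X}/B}$ become locally free and compatible with base change; the latter, being the pushforward of a line bundle from the finite scheme $\mathcal{S}$, has rank equal to the common number of nodes of the fibres. The relative surjection $N_{\mathcal{X}/\mathbb{P}^r\times B}\rightarrow T^1_{\mathcal{X}/B}$ induces $\bar\mu\colon q_*N_{\mathcal{X}/\mathbb{P}^r\times B}\rightarrow q_*T^1_{\mathcal{X}/B}$, whose fibre at $b$ is $\mu_{\mathcal{X}_b}$ by base change, and by adjunction along the finite map $q|_{\mathcal{S}}$ it corresponds to a morphism $\nu\colon (q|_{\mathcal{S}})^*q_*N_{\mathcal{X}/\mathbb{P}^r\times B}\rightarrow T^1_{\mathcal{X}/B}|_{\mathcal{S}}$ of sheaves on $\mathcal{S}$ whose value at a point $s\in\mathcal{S}$ (a node of $\mathcal{X}_{q(s)}$) is the $s$-component of $\mu_{\mathcal{X}_{q(s)}}$. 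I then let $Z\subset\mathcal{S}$ be the zero locus of $\nu$, a closed subscheme consisting precisely of the ``inactive'' nodes.

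Now I would play the two hypotheses against each other. On one hand $\mu_X\neq 0$ produces a node $p$ of $X=\mathcal{X}_0$ with $p\notin Z$, so $Z\neq\mathcal{S}$. On the other hand $\mathcal{S}$ is irreducible and $q|_{\mathcal{S}}\colon\mathcal{S}\rightarrow B$ is finite and surjective, so $\dim\mathcal{S}=\dim B$ and the proper closed subset $Z$ has $\dim Z<\dim B$. Hence $q(Z)$ is not dense in $B$, and for $b$ in a dense open subset no node of $\mathcal{X}_b$ lies in $Z$: every node of such a general fibre is active while $H^1(N_{\mathcal{X}_b})=(0)$, so the criterion makes $\mathcal{X}_b$ smoothable in $\mathbb{P}^r$.

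The last step is to descend from the general fibre to $X$. Consider the classifying morphism $f\colon B\rightarrow \mathrm{Hilb}$ into the Hilbert scheme containing all the $[\mathcal{X}_b]$ (they share the Hilbert polynomial of $X$ since $q$ is flat). The smoothable locus is the closure of the open locus of smooth curves, hence closed; as $f$ sends a dense open subset of the irreducible $B$ into it, all of $f(B)$, lying in the closure of that image, is contained in the smoothable locus, and in particular $f(0)=[X]$ is. Thus $X$ is smoothable in $\mathbb{P}^r$. I expect this transfer to be the main obstacle: smoothability of $X$ cannot be read off fibrewise (indeed $X$ itself may have inactive nodes), and one must instead use that membership in the closure of the smooth locus is a closed condition inherited under specialization along the irreducible base. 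The only other delicate point is arranging $\nu$ so that $Z$ is genuinely closed of the expected dimension, which is exactly where irreducibility of the critical scheme and finiteness of $q|_{\mathcal{S}}$ are used.
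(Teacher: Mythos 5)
Your proof is correct and follows essentially the same strategy as the paper: extract one active node from $h^{0}(N'_X)<h^{0}(N_X)$, globalize the evaluation map $H^{0}(N)\rightarrow H^{0}(T^1_p)$ over the irreducible critical scheme $\mathcal{S}$ (the paper does this via base change to $\mathcal{S}$ and a tautological section, you via adjunction along $q|_{\mathcal{S}}$ -- the cokernel the paper considers is supported exactly on your zero locus $Z$), conclude that all nodes of a general fibre are active and hence that the general fibre is smoothable, and finally specialize back to $X$. Your last step, identifying smoothability with membership in the closure of the open locus of smooth curves in the Hilbert scheme, makes explicit what the paper compresses into ``hence $X$ is too,'' but it is the same argument.
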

\begin{proof}
Up to shrinking $B$, we can assume that $B$ is nonsingular and $\mathcal{S} \rightarrow B$ is an \'etale covering of degree $|\text{Sing}(X)|$. Then consider the base change
$$\xymatrix{\widetilde{\mathcal{X}} \ar[d]^{\widetilde{q}} \ar[r] & \mathcal{X} \ar[d]^{q} \\ \mathcal{S} \ar[r] & B}$$
By construction, there exists a section $\sigma:\mathcal{S} \rightarrow \widetilde{\mathcal{X}}$ of $\widetilde{q}$.\\
Let $\mathcal{N}_{\widetilde{X}/S}$ (respectively, $\mathcal{T}^1_{\widetilde{X}/S}$) be the relative normal bundle (respectively, the first relative cotangent sheaf) of $\widetilde{X}$ over $S$ (see \cite{RA}, Section 2 for a precise definition), and consider the map $\widetilde{q}_{*}\mathcal{N}_{\widetilde{X}/S} \xrightarrow{\rho} \sigma^{*}\mathcal{T}^1_{\widetilde{X}/S} \rightarrow \text{coker} \rho \rightarrow 0$.\\
Since $h^{0}(N'_X) < h^{0}(N_X)$, the image of the map $H^{0}(N_X) \rightarrow H^{0}(T^1_X)$ is at least one-dimensional, thus there is at least one $p \in \text{Sing}(X)$ such that the map $H^{0}(N_X) \rightarrow H^{0}(T^1_p)$ is surjective. If $s=(q(\mathcal{X}_0),p) \in \mathcal{S}$, that map is exactly the fibre map
$${\widetilde{q}_{*}\mathcal{N}_{\widetilde{X}/S}}_{|s} \cong H^{0}(N_X) \xrightarrow{\rho_s} {\sigma^{*}\mathcal{T}^1_{\widetilde{X}/S}}_{|s} \cong H^{0}(T^1_p)$$
where the isomorphisms are given by the fact that the cohomology of both sheaves is constant on fibres and both sheaves are flat over $\mathcal{S}$. Then, since $\text{coker} \rho$ is a coherent sheaf, there exists a nonempty open set $\mathcal{U} \subset \mathcal{S}$ such that ${\text{coker} \rho}_{|\mathcal{U}} \cong (0)$.
Since $\mathcal{S}$ is irreducible, this implies that, for the general fibre $\widetilde{\mathcal{X}}_t$, the map $H^{0}(N_{\widetilde{\mathcal{X}}_t}) \rightarrow H^{0}(T^1_w)$ is surjective for every $w \in \text{Sing}(\widetilde{\mathcal{X}}_t)$. Since $H^{1}(N_{\widetilde{\mathcal{X}}_t})=(0)$ by upper semicontinuity of the cohomology, it follows that $\widetilde{\mathcal{X}}_t$ is smoothable, hence $X$ is too.
\end{proof}
\end{section}
\begin{section}{The main result}
We first need two other lemmas.
\begin{lem}
\label{h1nd-delta}
Let $r \geq 2$ be an integer, $D \subset \mathbb{P}^r$ a sufficiently general elliptic normal curve (of degree $r+1$) and $\Delta$ a hyperplane section of $D$. Then $H^{1}(N_D(-\Delta))=(0)$.
\end{lem}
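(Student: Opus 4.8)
The plan is to reduce the vanishing to an elementary cohomology computation on the elliptic curve underlying $D$, bypassing any stability analysis of $N_D$. Write $D = \varphi_L(E)$, where $E$ is an elliptic curve and $L \cong \mathcal{O}_D(1)$ is a line bundle of degree $r+1$ embedding $E$ via the complete linear system $|L|$. Since a hyperplane section satisfies $\mathcal{O}_D(\Delta) \cong \mathcal{O}_D(1)$, we have $N_D(-\Delta) \cong N_D(-1)$, so the statement is equivalent to $H^{1}(N_D(-1)) = (0)$.

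First I would use the normal bundle sequence $0 \rightarrow T_D \rightarrow {T_{\mathbb{P}^r}}_{|D} \rightarrow N_D \rightarrow 0$. Because $E$ has genus one, $T_D \cong \mathcal{O}_D$. Twisting this sequence by $\mathcal{O}_D(-1)$ and passing to cohomology, one gets a map $H^{1}({T_{\mathbb{P}^r}}_{|D}(-1)) \rightarrow H^{1}(N_D(-1))$ whose cokernel injects into $H^{2}(\mathcal{O}_D(-1)) = (0)$, the latter vanishing because $D$ is a curve. Hence the map is surjective, and it suffices to prove $H^{1}({T_{\mathbb{P}^r}}_{|D}(-1)) = (0)$.

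The key step is to identify this twisted restricted tangent bundle with the dual of the syzygy (kernel) bundle $M_L \doteq \ker\{H^{0}(L) \otimes \mathcal{O}_D \xrightarrow{\,\mathrm{ev}\,} L\}$. Restricting the dual Euler sequence of $\mathbb{P}^r$ to $D$ and twisting by $\mathcal{O}_D(1)$ identifies ${\Omega^1_{\mathbb{P}^r}}_{|D}(1)$ with $M_L$, using that $L$ is globally generated so that $\mathrm{ev}$ is surjective; dualizing gives ${T_{\mathbb{P}^r}}_{|D}(-1) \cong M_L^{\vee}$. By Serre duality on $E$, and since $\omega_E \cong \mathcal{O}_E$, one has $H^{1}(M_L^{\vee}) \cong H^{0}(M_L)^{\vee}$. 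Finally, taking global sections in the defining sequence of $M_L$ yields $0 \rightarrow H^{0}(M_L) \rightarrow H^{0}(L) \otimes H^{0}(\mathcal{O}_D) \rightarrow H^{0}(L)$, where the right-hand map is the canonical isomorphism $H^{0}(L) \otimes \mathbb{C} \xrightarrow{\ \sim\ } H^{0}(L)$; hence $H^{0}(M_L) = (0)$ and the desired vanishing follows.

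The genuine content is thus the identification ${T_{\mathbb{P}^r}}_{|D}(-1) \cong M_L^{\vee}$, together with the triviality of $T_D$ and of $\omega_E$; once these are in place the vanishing is forced, and the points requiring care are merely the surjectivity of $\mathrm{ev}$ (equivalently, global generation of $L$, which holds since $\deg L = r+1 \geq 3$) and the exactness of the restricted Euler sequences. I would emphasize that this argument uses no genericity of $D$ whatsoever: it holds for every smooth elliptic normal curve, so the ``sufficiently general'' hypothesis is stronger than needed. An alternative route would be to prove that $N_D$ is semistable of positive slope and deduce the vanishing from $H^{1}(N_D(-1)) \cong H^{0}(N_D^{\vee}(1))^{\vee}$; there the main obstacle is establishing semistability of the \emph{quotient} $N_D$ of ${T_{\mathbb{P}^r}}_{|D}$, which is exactly the difficulty the direct computation above sidesteps.
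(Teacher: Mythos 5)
Your proof is correct, but it takes a genuinely different route from the paper's. The paper argues by degeneration: it sets $Y = R \cup l$, the union of a rational normal curve $R$ and a line $l$ meeting it in two points, uses the splitting $N_R \cong \mathcal{O}_{\mathbb{P}^1}(r+2)^{\oplus r-1}$ together with sequences (\ref{seq1}) and (\ref{seq2}) to get $H^1(N_Y(-H))=(0)$ and $H^1(N'_Y)=(0)$, and then smooths $Y$ to an elliptic normal curve, transferring the vanishing to $D$ by semicontinuity --- this is precisely where the hypothesis ``sufficiently general'' enters. You instead compute directly on a fixed $D$: reduce to $H^{1}({T_{\mathbb{P}^r}}_{|D}(-1))=(0)$ via the normal bundle sequence and the triviality of $T_D$, identify ${T_{\mathbb{P}^r}}_{|D}(-1) \cong M_L^{\vee}$ through the restricted Euler sequence (using linear normality of $D$ to identify $H^{0}(\mathcal{O}_{\mathbb{P}^r}(1))$ with $H^{0}(L)$), and finish by Serre duality with $\omega_E \cong \mathcal{O}_E$ plus the immediate vanishing $H^{0}(M_L) = \ker\left\{H^{0}(L)\otimes \mathbb{C} \rightarrow H^{0}(L)\right\} = (0)$. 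Each step checks out: the relevant connecting maps land in $H^2$ of a coherent sheaf on a curve, hence vanish; $L$ is globally generated since $\deg L = r+1 \geq 3$; and restriction of an exact sequence of vector bundles to $D$ stays exact. What your argument buys is a strictly stronger and self-contained statement --- the vanishing holds for \emph{every} smooth elliptic normal curve, so the genericity hypothesis can be dropped from the lemma --- while the paper's degeneration buys coherence with the deformation-theoretic machinery (equisingular normal sheaves, smoothability) that the rest of the paper is built on, at the cost of proving only the generic case.
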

\begin{proof}
Let $R \subset \mathbb{P}^r$ be a rational normal curve, let $l \subset \mathbb{P}^r$ be a line intersecting $R$ in a smooth 0-dimensional scheme $S$ of length 2 and let $Y \doteq R \cup l$. Let $H$ be a hyperplane section of $Y$. Since $N_R \cong \mathcal{O}_{\mathbb{P}^1}(r+2)^{\oplus r-1}$, tensorizing the fundamental exact sequence
$$0 \rightarrow \mathcal{O}_R(-S) \rightarrow \mathcal{O}_Y \rightarrow \mathcal{O}_l \rightarrow 0$$
by ${N_Y}(-H)$ and using suitable twistings of sequence (\ref{seq2}) on $l$ and $R$, it follows that $H^1(N_Y(-H))=(0)$. Moreover, using again sequence (\ref{seq2}) on $l$ and sequence (\ref{seq1}), one proves that $H^1(N'_Y)=(0)$, hence $Y$ is smoothable to an elliptic normal curve $D$ having $H^{1}(N_D(-\Delta))=(0)$.
\end{proof}
\begin{lem}
\label{h1nxc}
Let $r \geq 2$ be an integer, let $H \subset \mathbb{P}^{r+1}$ be a hyperplane and let $C \subset H$ be a smooth curve such that $h^1(\mathcal{O}_C(1))\leq r+1$ and $H^1(N_{C/H})=(0)$. Define $N_C \doteq N_{C/H}$ and $\widetilde{N}_C \doteq N_{C/\mathbb{P}^{r+1}}$. Let $D \subset \mathbb{P}^{r+1}$ be a sufficiently general elliptic normal curve (of degree $r+2$) intersecting $C$ transversally in a general smooth $0$-dimensional subscheme $\Delta$ of length $r+2$. Let $X \doteq C \cup D$. Then the curve $X$ is smoothable in $\mathbb{P}^{r+1}$.
\end{lem}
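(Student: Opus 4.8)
The plan is to verify the hypotheses of Lemma~\ref{critical} for $X=C\cup D$, namely that $H^1(N_X)=(0)$, that $h^0(N'_X)<h^0(N_X)$, and that $X$ sits in a family of locally trivial deformations with irreducible critical scheme. The first observation is that $\Delta$ is a hyperplane section of $D$: indeed $\Delta\subset C\subset H$ and $\Delta\subset D$, so $\Delta\subseteq D\cap H$, and since $\operatorname{length}(\Delta)=r+2=\deg D=\operatorname{length}(D\cap H)$ one has $\Delta=D\cap H$. This is exactly the situation in which Lemma~\ref{h1nd-delta}, applied with $r$ replaced by $r+1$, gives $H^1(N_D(-\Delta))=(0)$, where $N_D=N_{D/\mathbb{P}^{r+1}}$. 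Throughout I will also use the normal bundle sequence of the flag $C\subset H\subset\mathbb{P}^{r+1}$,
$$0\rightarrow N_C\rightarrow \widetilde{N}_C\rightarrow \mathcal{O}_C(1)\rightarrow 0,$$
where $\mathcal{O}_C(1)={N_{H/\mathbb{P}^{r+1}}}_{|C}$, together with the two sequences (\ref{seq1}) and (\ref{seq2}), in which, the ambient space being $\mathbb{P}^{r+1}$, the quotient sheaf denoted $N_C$ there is here $\widetilde{N}_C$.

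First I would run the cohomology through these sequences. Feeding $H^1(N_D(-\Delta))=(0)$ into (\ref{seq2}) and using that $T^1_X$ is a torsion sheaf gives $H^1({N_X}_{|D}(-\Delta))=(0)$; then (\ref{seq1}) yields $H^1(N'_X)\cong H^1(\widetilde{N}_C)$, and the flag sequence together with the hypothesis $H^1(N_C)=(0)$ gives $H^1(\widetilde{N}_C)\cong H^1(\mathcal{O}_C(1))$. Hence $H^1(N'_X)\cong H^1(\mathcal{O}_C(1))$, of dimension $h^1(\mathcal{O}_C(1))\leq r+1$. Composing the surjections of (\ref{seq1}) and of the flag sequence produces a surjection $N'_X\twoheadrightarrow\mathcal{O}_C(1)$ whose kernel $K$ fits into $0\rightarrow{N_X}_{|D}(-\Delta)\rightarrow K\rightarrow N_C\rightarrow 0$ and so satisfies $H^1(K)=(0)$; thus the induced map $H^1(N'_X)\to H^1(\mathcal{O}_C(1))$ is the isomorphism just found.

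The core is to upgrade this to $H^1(N_X)=(0)$. From $0\rightarrow N'_X\rightarrow N_X\rightarrow T^1_X\rightarrow 0$ one has $H^1(N_X)=\operatorname{coker}\big(\delta\colon H^0(T^1_X)\to H^1(N'_X)\big)$, so everything reduces to the surjectivity of the coboundary $\delta$. Pushing this sequence out along $N'_X\twoheadrightarrow\mathcal{O}_C(1)$ gives $0\rightarrow\mathcal{O}_C(1)\rightarrow\mathcal{Q}\rightarrow T^1_X\rightarrow 0$ with $H^1(\mathcal{Q})\cong\operatorname{coker}\delta\cong H^1(N_X)$. The step I expect to be the main obstacle is the local computation at the nodes showing that the extension class of this sequence is nonzero at each of the $r+2$ nodes — reflecting that each node is genuinely being smoothed — so that $\mathcal{Q}\cong\mathcal{O}_C(1)(\Delta)$. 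Granting this, $H^1(N_X)\cong H^1(\mathcal{O}_C(1)(\Delta))$; by Serre duality $h^1(\mathcal{O}_C(1)(\Delta))=h^0(\omega_C\otimes\mathcal{O}_C(-1)(-\Delta))$, and since $h^0(\omega_C\otimes\mathcal{O}_C(-1))=h^1(\mathcal{O}_C(1))\leq r+1<r+2=\deg\Delta$ with $\Delta$ general, the $r+2$ general points impose independent conditions on these at most $r+1$ sections, forcing $h^0(\omega_C\otimes\mathcal{O}_C(-1)(-\Delta))=0$ and hence $H^1(N_X)=(0)$. The inequality $h^0(N'_X)<h^0(N_X)$ is then automatic: as $\dim H^0(T^1_X)=r+2$ exceeds $\dim H^1(N'_X)\leq r+1$, the map $\delta$ has a nontrivial kernel, and the long exact sequence gives $h^0(N_X)=h^0(N'_X)+\dim\ker\delta>h^0(N'_X)$.

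Finally I would produce the required family. Keeping $C$ fixed and letting $D'$ range over the irreducible family of elliptic normal curves of degree $r+2$ meeting $C$ transversally along a hyperplane section contained in $C$ gives a family of locally trivial deformations of $X$ whose critical scheme is the incidence $\mathcal{S}=\{(D',p):p\in C\cap D'\}$. Projecting $\mathcal{S}\to C$, the fibre over a point $p$ is the family of such elliptic normal curves through $p$, which is irreducible; since $C$ is irreducible, so is $\mathcal{S}$ (equivalently, the monodromy permutes the $r+2$ nodes transitively). With $H^1(N_X)=(0)$, $h^0(N'_X)<h^0(N_X)$ and $\mathcal{S}$ irreducible established, Lemma~\ref{critical} gives that $X$ is smoothable in $\mathbb{P}^{r+1}$. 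The only genuinely delicate points are the nonvanishing of the local extension class at each node and the irreducibility of $\mathcal{S}$; all the cohomological vanishings follow formally from Lemma~\ref{h1nd-delta} and the hypotheses on $C$.
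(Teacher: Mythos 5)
Your overall strategy is exactly the paper's: reduce to Lemma \ref{critical}, obtain $H^1({N_X}_{|D}(-\Delta))=(0)$ from Lemma \ref{h1nd-delta} via sequence (\ref{seq2}) (and your opening observation that $\Delta=D\cap H$ is precisely what justifies invoking that lemma), identify $h^1(N'_X)=h^1(\widetilde{N}_C)\leq r+1$ via sequence (\ref{seq1}) and the flag sequence $0\rightarrow N_C\rightarrow\widetilde{N}_C\rightarrow\mathcal{O}_C(1)\rightarrow 0$, and deduce $h^0(N'_X)<h^0(N_X)$ by comparing with $h^0(T^1_X)=r+2$. In fact your pushout sheaf $\mathcal{Q}=N_X/K$ is literally the paper's sheaf $\mathcal{A}={N_X}_{|C}/N_C$ from its $3\times 3$ diagram (note $K\supset {N_X}_{|D}(-\Delta)$ with quotient $N_C$, so $N_X/K\cong {N_X}_{|C}/\gamma(N_C)$), and your Serre-duality argument that $h^1(\mathcal{O}_C(1)(\Delta))=0$ for $\Delta$ general is equivalent to the paper's geometric Riemann--Roch step. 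Your treatment of the irreducibility of the critical scheme is a sketch, but it is no less complete than the paper's own one-line assertion for the universal family of locally trivial deformations, so I would not count that against you.

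The genuine gap is the step you explicitly ``grant'': the proof that the extension $0\rightarrow\mathcal{O}_C(1)\rightarrow\mathcal{Q}\rightarrow T^1_X\rightarrow 0$ is locally nonsplit at every node, i.e.\ that $\mathcal{Q}$ is torsion-free and hence isomorphic to $\mathcal{O}_C(1)(\Delta)$. This is the heart of the lemma and is not a formality: if the extension were locally split at a subset of the nodes, then $H^1(\mathcal{Q})\cong H^1(\mathcal{O}_C(1)(\Delta'))$ with $\Delta'$ the set of nonsplit nodes, and this is nonzero as soon as two or more nodes split (in the intended application $h^1(\mathcal{O}_C(1))=r+1$), so the desired vanishing $H^1(N_X)=(0)$ genuinely hinges on this step. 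The paper proves it fibrewise: $\mathcal{Q}$ is a line bundle if and only if the composite $\gamma_x\colon {N_C}_{|x}\rightarrow{\widetilde{N}_C}_{|x}\rightarrow{N_X}_{|x}$ is injective for every $x\in C$; off $\Delta$ this is immediate, while at $x\in\Delta$ one computes $\ker\bigl({\widetilde{N}_C}_{|x}\rightarrow{N_X}_{|x}\bigr)=T_xD$, the image of the tangent line to $D$ in $T_x\mathbb{P}^{r+1}/T_xC$, and this line meets $\text{im}\bigl({N_C}_{|x}\rightarrow{\widetilde{N}_C}_{|x}\bigr)=T_xH/T_xC$ trivially because $D$ is transverse to $H$ at $x$ --- a transversality that is automatic from your own observation that $D\cap H=\Delta$ is reduced. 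Note also that your heuristic (the class is nonzero because ``each node is genuinely being smoothed'') points at the wrong mechanism: what is needed is that $N_C\rightarrow{N_X}_{|C}$ be a subbundle inclusion, which is a statement about the position of the tangent direction of $D$ relative to the hyperplane containing $C$, not about deformations of $X$; smoothability is the conclusion of the lemma and cannot be fed into this step.
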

\begin{proof}
Let $\Sigma$ be a hyperplane section of $C$ and consider the commutative exact diagram
\begin{equation}
\xymatrix{ & 0 \ar[d] & 0 \ar[d] & &\\ 0 \ar[r] & N_{C} \ar[r]^{\cong} \ar[d]^{\alpha} & N_{C} \ar[r] \ar[d]^{\gamma} & 0  \ar[d] & \\ 0 \ar[r]  & \widetilde{N}_C \ar[r]^{\beta} \ar[d] & {N_{X}}_{|C} \ar[r] \ar[d] & T^1_X \ar[r] \ar[d]^{\cong} & 0 \\0 \ar[r] & \mathcal{O}_C(\Sigma) \ar[d] \ar[r] & \mathcal{A} \ar[r] \ar[d] & \mathcal{O}_{\Delta} \ar[d] \ar[r] & 0 \\ & 0 & 0 & 0 &}
\end{equation}
(the central horizontal sequence is given by tensorizing sequence (\ref{seq2}) on $C$ by $\mathcal{O}_X(\Delta)$).
Since $\deg {N_X}_{|C} - \deg N_C= \deg C+ r+2$, if the sheaf $\mathcal{A}$ is a line bundle, then it must be isomorphic to $\mathcal{O}_C(\Sigma+\Delta)$. Note that $\mathcal{A}$ is a line bundle if and only if the fibre map $\gamma_x \doteq \beta_x \circ \alpha_x: {N_C}_{|x} \rightarrow {N_X}_{|x}$ is injective for all $x \in C$. Note also that $\alpha_x$ is injective for all $x \in C$. Consider the exact sequence of vector spaces
$${\widetilde{N_C}}_{|x} \xrightarrow{\beta_x} {N_X}_{|x} \rightarrow {T^1_X}_{|x} \rightarrow 0.$$
If $x \notin \Delta$, then ${T^1_X}_{|x}=(0)$, hence $\beta_x$ and $\gamma_x$ are injective. If $x \in \Delta$, then $\ker \beta_x = T_{x}D$, where $T_{x}D$ is the tangent line to the curve $D$ at $x$. Since $D$ intersects $H$ transversally, $T_{x}D$ does not lie in the tangent space to $H$ at $x$, hence $\ker \beta_x \cap \text{im }\alpha_x=(0)$ and the composition map $\gamma_x$ is again injective.\\
Since $\Delta$ is general and $h^{1}(\mathcal{O}_C(\Sigma))\leq r+1$, by the geometric version of Riemann-Roch theorem one has that $H^{1}(\mathcal{O}_C(\Sigma+\Delta))=(0)$, hence the vertical central sequence in the diagram gives $H^{1}({N_X}_{|C})=(0)$.\\
Consider the exact sequence
$$0 \rightarrow {N_X}_{|D}(-\Delta) \rightarrow N_X \rightarrow {N_X}_{|C} \rightarrow 0.$$
Since $H^{1}(N_D(-\Delta))=(0)$ by Lemma \ref{h1nd-delta}, sequence (\ref{seq2}) gives $H^1({N_X}_{|D}(-\Delta))=(0)$, hence $H^1(N_X)=(0)$.\\
Moreover, using sequence (\ref{seq1}), one obtains $h^1(N'_X)=h^{1}(\widetilde{N}_C) \leq r+1$, the last inequality being given by the left vertical sequence of the diagram. It then follows from the exact sequence
$$0 \rightarrow N'_X \rightarrow N_X \rightarrow T^1_X \rightarrow 0$$
that $h^{0}(N'_X) < h^{0}(N_X)$. Let $B$ be a component of the subscheme of the Hilbert scheme parameterizing locally trivial deformations of $X$, and let $u:\mathcal{X} \rightarrow B$ be the universal family. Since the critical scheme $\mathcal{S} \subset \mathcal{X}$ of $u$ is irreducible,  Lemma \ref{critical} applies and gives the smoothability of $X$ in $\mathbb{P}^{r+1}$.
\end{proof}
\begin{teo}
\label{ind}
For all integers $r \geq 2$ and $g(r) \doteq {r+2 \choose 2}$, the locus $\mathcal{S}^r_{g(r)}$ has a component $V_r$ of codimension ${r+1 \choose 2}$ in $\mathcal{S}_{g(r)}$ such that, for a general point $[C,L] \in V_r$, the line bundle $L$ is very ample and $h^{1}(N_C)=0$ in the embedding defined by $L$.
\end{teo}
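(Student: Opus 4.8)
The plan is to argue by induction on $r$, taking Proposition \ref{r=2} as the base case $r=2$. For the inductive step, suppose $V_r \subset \mathcal{S}^r_{g(r)}$ has been constructed, and let $[C,L]$ be a general point of $V_r$, so that $C \subset \mathbb{P}^r$ is half-canonically embedded by $L=\mathcal{O}_C(1)$, with $h^0(L)=r+1$ and $h^1(N_C)=0$. I regard $\mathbb{P}^r=H$ as a hyperplane in $\mathbb{P}^{r+1}$ and check that $C$ meets the hypotheses of Lemma \ref{h1nxc}: from $L^2 \cong \omega_C$ one gets $h^1(\mathcal{O}_C(1))=h^0(\omega_C \otimes L^{-1})=h^0(L)=r+1$, while $H^1(N_{C/H})=H^1(N_C)=0$ by assumption. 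Choosing a sufficiently general elliptic normal curve $D \subset \mathbb{P}^{r+1}$ of degree $r+2$ meeting $C$ transversally along a general hyperplane section $\Delta$ of $D$ (of length $r+2$), I set $X := C \cup D$.

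By Lemma \ref{h1nxc} the curve $X$ is smoothable in $\mathbb{P}^{r+1}$, and the proof of that lemma in fact shows $H^1(N_X)=0$; hence, by upper semicontinuity, a general smoothing $C' \subset \mathbb{P}^{r+1}$ again satisfies $h^1(N_{C'})=0$. A direct computation of invariants gives $p_a(X)=g(r)+1+(r+2)-1=g(r+1)$ and $\deg X=(g(r)-1)+(r+2)=g(r+1)-1$, so $C'$ is a smooth, linearly normal curve of genus $g(r+1)$ and degree $g(r+1)-1$ in $\mathbb{P}^{r+1}$ with $h^0(\mathcal{O}_{C'}(1))=r+2$ --- precisely the numerical profile of a half-canonical curve. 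The choice of $\Delta$ as a hyperplane section of $D$ is what forces $\omega_X|_D=\omega_D(\Delta)=\mathcal{O}_D(1)$, tying the dualizing sheaf to $\mathcal{O}_X(1)$ along the elliptic component.

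The decisive point, and the one I expect to be the main obstacle, is to upgrade this numerical coincidence to the genuine half-canonicality $\mathcal{O}_{C'}(1)^2 \cong \omega_{C'}$; indeed $\mathcal{O}_X(1)^2$ and $\omega_X$ already disagree in multidegree on $X$ itself, so no naive limit suffices. My approach is to produce on a quasi-stable model of $X$ a stable spin structure (in the sense of \cite{COR}) whose associated square root of the dualizing sheaf is compatible with the hyperplane bundle, and to smooth it along the family furnished by Lemma \ref{h1nxc}, so that the limit theta characteristic is carried to $\mathcal{O}_{C'}(1)$; the parity of $r+2$ dictates at how many of the $r+2$ nodes an exceptional component must be inserted. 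The special value $g(r+1)=\binom{r+3}{2}$ is exactly what makes this possible: it is the unique genus for which $\chi(N_{C'})=4\bigl(g(r+1)-1\bigr)$ equals $\bigl(3g(r+1)-3-\binom{r+2}{2}\bigr)+\bigl((r+2)^2-1\bigr)$, i.e. the expected dimension of $\mathcal{S}^{r+1}_{g(r+1)}$ augmented by $\dim \mathrm{PGL}_{r+2}$, so that a family of half-canonically embedded curves can fill a whole component of the Hilbert scheme and the general curve through $C'$ is forced to be half-canonical. Granting this, Fact \ref{ineq} applied to the half-canonical $C'$ with $h^1(N_{C'})=0$ places $[C',\mathcal{O}_{C'}(1)]$ in a component $V_{r+1} \subset \mathcal{S}^{r+1}_{g(r+1)}$ of codimension $\binom{r+2}{2}$, whose general member has $\mathcal{O}_{C'}(1)$ very ample and $h^1(N)=0$, completing the induction.
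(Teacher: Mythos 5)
Your construction coincides exactly with the paper's: the same induction with Proposition \ref{r=2} as base case, the same reducible curve $X=C\cup D$ with $D$ an elliptic normal curve of degree $r+2$ meeting $C$ transversally in a length-$(r+2)$ scheme $\Delta$, smoothability via Lemma \ref{h1nxc}, and Fact \ref{ineq} to conclude. You also correctly isolate the decisive point: upgrading the numerical coincidence to a genuine isomorphism $\mathcal{O}_{\Gamma}(2)\cong\omega_{\Gamma}$ on a general smoothing $\Gamma$ of $X$. But at exactly this point your argument stops being a proof. The route you propose --- a Cornalba stable spin structure on a quasi-stable model of $X$, smoothed so that ``the limit theta characteristic is carried to $\mathcal{O}_{C'}(1)$'' --- is only announced (``Granting this\dots''), and it faces a difficulty you do not address: an abstract smoothing of a spin structure lives in the moduli space of stable spin curves and need not agree with the embedded deformation in the Hilbert scheme, so there is no a priori reason that the theta characteristic it produces on the nearby smooth curve is the hyperplane bundle. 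Moreover, the dimension count you offer to close the gap is circular: the equality $\chi(N_{C'})=\bigl(3g(r+1)-3-\binom{r+2}{2}\bigr)+\dim\mathrm{PGL}_{r+2}$ shows only that a Hilbert-scheme component consisting generically of half-canonically embedded curves is numerically \emph{possible}; it cannot force the particular component through $C'$ to be of this type unless one already knows that $C'$ itself is half-canonically embedded --- which is precisely what is to be proved.

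The paper closes this gap by a purely cohomological argument that your proposal does not contain. On the reducible curve one computes $h^{0}(\mathcal{I}_X(2))=0$ (every quadric through $C$ is reducible), $h^{1}(\mathcal{O}_X(2))=h^{2}(\mathcal{I}_X(2))=1$ via the Mayer--Vietoris sequence $0\to\mathcal{I}_X(2)\to\mathcal{I}_{C/\mathbb{P}^{r+1}}(2)\oplus\mathcal{I}_D(2)\to\mathcal{I}_{\Delta}(2)\to 0$, using $h^{1}(\mathcal{O}_C(2))=h^{1}(\omega_C)=1$ from the inductive hypothesis and the fact that $r+2$ points impose independent conditions on quadrics, and then $h^{1}(\mathcal{I}_X(2))=0$ by Riemann--Roch. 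Since $\chi(\mathcal{I}_{\Gamma}(2))=\chi(\mathcal{I}_X(2))$ in the flat family and $h^{0},h^{1}$ can only drop under specialization to the general fibre, one gets $h^{2}(\mathcal{I}_{\Gamma}(2))=h^{1}(\mathcal{O}_{\Gamma}(2))=1$; as $\mathcal{O}_{\Gamma}(2)$ has degree $2g(r+1)-2$, Serre duality forces $\omega_{\Gamma}\otimes\mathcal{O}_{\Gamma}(-2)$ to be a degree-zero line bundle with a section, hence trivial, i.e.\ $\mathcal{O}_{\Gamma}(2)\cong\omega_{\Gamma}$. This single semicontinuity computation is what converts the numerical profile into genuine half-canonicality; it, together with the degree-one Mayer--Vietoris argument giving linear normality of $X$ and hence of $\Gamma$ (which you assert but do not prove), is the missing content of your proposal.
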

\begin{proof}
The proof is by induction on $r$. The base case is given by Proposition \ref{r=2}. Let $[C,L] \in V_r$ be a general point. Then by the inductive assumption $C = \varphi_{L}(C) \subset  H \cong \mathbb{P}^r$ is a smooth curve of genus $g = g(r)$ and degree $g-1$ such that $h^{1}(N_C)=0$.\\
Embed $H$ as a hyperplane in $\mathbb{P}^{r+1}$ and let $\Delta$ be a general smooth $0$-dimensional subscheme of $C$ of length $r+2$. Let $D \subset \mathbb{P}^{r+1}$ be a sufficiently general elliptic normal curve (of degree $r+2$) intersecting $C$ transversally in $\Delta$ and let $X \doteq C \cup D$.
One has $p_a(X)=g+r+2=\binom{r+3}{2}$ and $\deg \mathcal{O}_X(1) = p_a(X)-1$.\\
Let $\widetilde{\mathcal{I}}_C \doteq \mathcal{I}_{C/\mathbb{P}^{r+1}}$, $\widetilde{N}_C \doteq N_{C/\mathbb{P}^{r+1}}$ and $\mathcal{I}_{\Delta} \cong \mathcal{I}_{\Delta/\mathbb{P}^{r+1}}$. We first want to show that $h^{1}(\mathcal{O}_X(2))=1$. The cohomology sequence of
\begin{equation}
\label{ix2opr+12}
0 \rightarrow \mathcal{I}_X(2) \rightarrow \mathcal{O}_{\mathbb{P}^{r+1}}(2) \rightarrow \mathcal{O}_X(2) \rightarrow 0
\end{equation}
gives $h^{1}(\mathcal{O}_X(2))=h^{2}(\mathcal{I}_X(2))$. Consider the Mayer-Vietoris sequence
\begin{equation}
\label{mvx}
0 \rightarrow \mathcal{I}_X(2) \rightarrow \widetilde{\mathcal{I}}_C(2) \oplus \mathcal{I}_D(2) \rightarrow \mathcal{I}_{\Delta}(2) \rightarrow 0.
\end{equation}
Since $r+2$ points always impose independent conditions to quadrics in $\mathbb{P}^{r+1}$, the cohomology sequence associated to
$$0 \rightarrow \mathcal{I}_{\Delta}(2) \rightarrow \mathcal{O}_{\mathbb{P}^{r+1}}(2) \rightarrow \mathcal{O}_{\Delta}(2) \rightarrow 0$$
gives $h^{1}(\mathcal{I}_{\Delta}(2))=0$.\\
Sequences analogous to (\ref{ix2opr+12}) for $D$ and $C$ give $h^{2}(\mathcal{I}_D(2))=0$ and $h^2(\widetilde{\mathcal{I}}_C(2))=h^{1}(\mathcal{O}_C(2))=1$, where the last equality holds by inductive assumption, hence (\ref{mvx}) gives $h^{1}(\mathcal{O}_X(2))=1$.\\
Note that every quadric hypersurface in $\mathbb{P}^{r+1}$ containing $C$ is reducible, thus $h^{0}(\mathcal{I}_X(2))=0$.
Moreover, by Riemann-Roch one has $h^{0}(\mathcal{O}_X(2))=p_a(X)$, hence (\ref{ix2opr+12}) gives $h^1(\mathcal{I}_X(2))=0$.\\
By Lemma \ref{h1nxc} one has that $X$ deforms to a smooth curve $\Gamma$ of geometric genus $p_a(X)=\binom{r+3}{2}$ and such that $h^1(N_{\Gamma})=0$. Since $\chi(\mathcal{I}_{\Gamma}(2))=\chi(\mathcal{I}_X(2))$ and $h^1(\mathcal{I}_X(2))=0$, the upper semicontinuity of the cohomology gives $h^2(\mathcal{I}_{\Gamma}(2))=h^{1}(\mathcal{O}_{\Gamma}(2))=1$, hence $\mathcal{O}_{\Gamma}(2) \cong \omega_{\Gamma}$. Computing the cohomology of the Mayer-Vietoris sequence ($C$ is linearly normal by inductive assumption)
$$0 \rightarrow \mathcal{I}_X(1) \rightarrow \widetilde{\mathcal{I}}_C(1) \oplus \mathcal{I}_D(1) \rightarrow \mathcal{I}_{\Delta}(1) \rightarrow 0$$
one obtains that $X$ is linearly normal, thus $\Gamma$ is too. It then follows from Fact \ref{ineq} that the pair $[\Gamma,\mathcal{O}_{\Gamma}(1)]$ is parameterized by a point of a component $V_{r+1}$ of the locus $\mathcal{S}^{r+1}_{g(r+1)}$ having codimension $\binom{(r+1)+1}{2}$ in $\mathcal{S}_{g(r+1)}$, and the inductive step is proved.
\end{proof}
Theorem \ref{ind} combined with Proposition \ref{farkasinductive} immediately gives the main result of the paper
\begin{teo}
For all integers $r \geq 2$ and $g \geq g(r) \doteq {r+2 \choose 2}$, the locus $\mathcal{S}^r_{g}$ has a component of codimension ${r+1 \choose 2}$ in $\mathcal{S}_{g}$.
\end{teo}
\end{section}
$\left.\right.$\\ \\
\textbf{Acknowledgements.}\\
The author wants to thank Claudio Fontanari and Letizia Pernigotti for having drawn his attention to the problem, and Edoardo Ballico and Edoardo Sernesi for helpful discussions.
\small

\vspace{0.3cm}

\noindent
Luca Benzo \newline
Dipartimento di Matematica \newline
Universit\`a di Trento \newline
Via Sommarive 14 \newline
38123 Trento, Italy. \newline
E-mail address: luca.benzo@unitn.it
\end{document}